\newtheorem{theorem}{Theorem}
\theoremstyle{plain}
{\theoremstyle{definition}
\newtheorem{remark}{Remark}}
\newtheorem{GKSexample}{Example}
\newcommand{\bsy}{{\boldsymbol{y}}}
\newcommand{\bsz}{{\boldsymbol{z}}}
\newcommand{\bsA}{{\boldsymbol{A}}}
\newcommand{\bszero}{{\boldsymbol{0}}} 
\newcommand{\bbE}{{\mathbb{E}}}
\newcommand{\bbR}{{\mathbb{R}}}
\newcommand{\R}{{\mathbb{R}}} 
\newcommand{\calO}{{\mathcal{O}}}
\newcommand{\rd}{{\mathrm{d}}}
\definecolor{darkred}{RGB}{139,0,0}
\definecolor{darkgreen}{RGB}{0,100,0}
\definecolor{darkmagenta}{RGB}{139,0,139}
\definecolor{darkorange}{RGB}{220,110,20}
\newcommand{\GKSind}{\mathop{\rm ind}}
\title{Preintegration is not smoothing when monotonicity fails}
\author{Alexander D.~Gilbert, Frances Y.~Kuo and Ian H.~Sloan%
   \footnote{School of Mathematics and Statistics, UNSW Sydney, Sydney NSW 2052, Australia \newline
   Emails: alexander.gilbert@unsw.edu.au, f.kuo@unsw.edu.au, i.sloan@unsw.edu.au}
   }
\date{\today}
\begin{document}

\maketitle

\begin{abstract}
Preintegration is a technique for high-dimensional integration
over $d$-dimensional Euclidean space, which is designed to reduce an
integral whose integrand contains kinks or jumps to a $(d-1)$-dimensional
integral of a smooth function. The resulting smoothness allows efficient
evaluation of the $(d-1)$-dimensional integral by a Quasi-Monte Carlo or
Sparse Grid method. The technique is similar to conditional sampling in
statistical contexts, but the intention is different: in conditional
sampling the aim is to reduce the variance, rather than to achieve
smoothness.  Preintegration involves an initial integration with respect
to one well chosen real-valued variable. Griebel, Kuo, Sloan [\emph{Math.\
Comp.\ 82 (2013), 383--400}] and Griewank, Kuo, Le\"ovey, Sloan [\emph{J.\
Comput.\ Appl.\ Maths.\ 344 (2018), 259--274}] showed that the resulting
$(d-1)$-dimensional integrand is indeed smooth under appropriate
conditions, including a key assumption
--- the integrand of the smooth function underlying the kink or jump is
\emph{strictly monotone} with respect to the chosen special variable when
all other variables are held fixed. The question addressed in this paper
is whether this monotonicity property with respect to one well chosen
variable is necessary. We show here that the answer is essentially yes, in
the sense that without this property the resulting $(d-1)$-dimensional
integrand is generally not smooth, having square-root or other
singularities.
\end{abstract}

\section{Introduction} \label{sec:intro}

Preintegration is a method for numerical integration over $\bbR^d$, where
$d$ may be large, in the presence of ``kinks'' (i.e., discontinuities in
the gradients) or ``jumps'' (i.e., discontinuities in the function
values). In this method one of the variables is integrated out in a
``preintegration'' step, with the aim of creating a smooth integrand over
$\bbR^{d-1}$, smoothness being important if the intention is to
approximate the $(d-1)$-dimensional integral by a method that relies on
some smoothness of the integrand, such as the Quasi-Monte Carlo (QMC)
method \cite{DKS13} or Sparse Grid (SG) method \cite{BG04}.

Integrands with kinks and jumps arise in option pricing, because an option
is normally considered worthless if the value falls below a predetermined
strike price. In the case of a continuous payoff function this introduces
a kink, while in the case of a binary or other digital option it
introduces a jump. Integrands with jumps also arise in computations of
cumulative probability distributions, see \cite{GKS21}.

In this paper we consider the version of preintegration for functions with
kinks or jumps presented in the recent papers \cite{GKS13,GKSnote,GKLS18},
in which the emphasis was on a rigorous proof of smoothness of the
preintegrated $(d-1)$-dimensional integrand, in the sense of proving
membership of a certain mixed derivative Sobolev space, under appropriate
conditions.

A key assumption in \cite{GKS13,GKSnote,GKLS18} was that the smooth
function (the function $\phi$ in \eqref{problem2} below) underlying the
kink or jump is \emph{strictly monotone} with respect to the special
variable chosen for the preintegration step, when all other variables are
held fixed. While a satisfactory analysis was obtained under that
assumption, it was not clear from the analysis in
\cite{GKS13,GKSnote,GKLS18} whether or not the monotonicity assumption is
in some sense necessary. That is the question we address in the present
paper. The short answer is that \emph{the monotonicity condition is
necessary, in that in the absence of monotonicity the integrand typically
has square-root or other singularities}.

\subsection{Related work}

A similar method has already appeared as a practical tool in many other
papers, often under the heading ``conditional sampling'', see
\cite{GlaSta01}, Lemma~7.2 and preceding comments in \cite{ACN13a}, and a
recent paper \cite{LPB21} by L'Ecuyer and colleagues. Also relevant are
root-finding strategies for identifying where the payoff is positive, see
a remark in \cite{ACN13b} and \cite{Hol11,NuyWat12}. For other
``smoothing'' methods, see \cite{BST17,WWH17}.

The goal in conditional sampling is to decrease the variance of the
integrand, motivated by the idea that if the Monte Carlo method is the
chosen method for evaluating the integral then reducing the variance will
certainly reduce the root mean square expected error. The reality of
variance reduction in the preintegration context was explored analytically
in Section 4 of \cite{GKLS18}. But if cubature methods are used that
depend on smoothness of the integrand, as with QMC and SG methods, then
variance reduction is not the only consideration. In the present work the
focus is on smoothness of the resulting integrand.

\subsection{The problem}

For the rest of the paper we will follow the setting of \cite{GKLS18}. The
problem addressed in \cite{GKLS18} was the approximate evaluation of the
$d$-dimensional integral
\begin{equation} \label{problem1}
 I_d f
 \,:=\, \int_{\bbR^d}f(\bsy)\rho_d(\bsy)\,\rd\bsy
 \,=\, \int_{-\infty}^\infty\ldots\int_{-\infty}^\infty
       f(y_1,\ldots,y_d)\,\rho_d(\bsy)\,\rd y_1 \cdots\rd y_d,
\end{equation}
with
\[
  \rho_d(\bsy) \,:=\, \prod_{k=1}^d \rho(y_k),
\]
where $\rho$ is a continuous and strictly positive probability density
function on $\bbR$ with some smoothness, and $f$ is a real-valued function
of the form
\begin{equation} \label{problem2}
 f(\bsy) \,=\, \theta(\bsy)\,\GKSind\big(\phi(\bsy)\big),
\end{equation}
or more generally
\begin{equation} \label{problem2t}
 f(\bsy) \,=\, \theta(\bsy)\,\GKSind\big(\phi(\bsy)-t\big),
\end{equation}
where $\theta$ and $\phi$ are somewhat smooth functions, $\GKSind(\cdot)$
is the indicator function which has the value~$1$ if the argument is
positive and~$0$ otherwise, and $t$ is an arbitrary real number. When $t =
0$ and $\theta = \phi$ we have $f(\bsy) = \max(\phi(\bsy),0)$ and thus we
have the familiar kink seen in option pricing through the occurrence of a
strike price. When $\theta$ and $\phi$ are different (for example, when
$\theta(\bsy) = 1$) we have a structure that includes digital options.

The key assumption on the smooth function $\phi$ in \cite{GKLS18} was that
it has a positive partial derivative with respect to some well chosen
variable $y_j$ (and so is an increasing function of $y_j$); that is, we
assume that for the special choice of $j\in \{1,\ldots,d\}$  we have
\begin{equation} \label{monotone}
 \frac{\partial \phi}{\partial y_j}(\bsy) > 0
 \qquad \mbox{for all} \quad
 \bsy \in \bbR^d.
\end{equation}
In other words, $\phi$ is monotone increasing with respect to $y_j$ when
all variables other than $y_j$ are held fixed.

With the variable $y_j$ chosen to satisfy this condition, the
preintegration step is to evaluate
\begin{equation}\label{firstPj}
  (P_j f)(\bsy_{-j})
  \,:=\, \int_{-\infty}^\infty f(y_j,\bsy_{-j})\,\rho(y_j)\,\rd y_j,
\end{equation}
where $\bsy_{-j}\in\bbR^{d-1}$ denotes all the components of $\bsy$ other
than $y_j$. Once $(P_j f)(\bsy_{-j})$ is known we can evaluate $I_d f$ as
the $(d-1)$-dimensional integral
\begin{equation}\label{eq:reducedint}
 I_d f \,=\, \int_{\bbR^{d-1}}(P_j f)(\bsy_{-j})\,\rho_{d-1}(\bsy_{-j})\,\rd\bsy_{-j},
\end{equation}
which can be done efficiently if $(P_j f)(\bsy_{-j})$ is smooth. In the
implementation of preintegration, note that if the integral
\eqref{eq:reducedint} is to be evaluated by an $N$-point cubature rule,
then the preintegration step in \eqref{firstPj} needs to be carried out
for $N$ different values of $\bsy_{-j}$.

The key is the preintegration step. Because of the monotonicity assumption
\eqref{monotone}, for each $\bsy_{-j}\in \bbR^{d-1}$ there is at most one
value of the integration variable $y_j$ such that $\phi(y_j, \bsy_{-j}) =
t$. We denote that value of $y_j$, if it exists, by $\xi(\bsy_{-j})
=\xi_t(\bsy_{-j})$ so that $\phi(\xi(\bsy_{-j}), \bsy_{-j}) =
t$. Under the condition \eqref{monotone} it follows from the implicit
function theorem that $\xi(\bsy_{-j})$ is smooth if $\phi$ is smooth.
Then we can write the preintegration step as
\begin{equation}\label{P1psi}
  (P_j f)(\bsy_{-j})
  \,=\,
  \int_{\xi(\bsy_{-j})}^\infty
  \theta\big(y_j,\bsy_{-j}\big)\,\rho(y_j)\,\rd y_j,
\end{equation}
 which is a smooth function of $\bsy_{-j}$ if $\theta$ is smooth.

\subsection{Informative examples} \label{sec:eg}

We now illustrate the success and failure of the preintegration process
with some simple examples. In these examples we take $d=2$ and $t=0$, and
choose $\rho$ to be the standard normal probability density, $\rho(y) =
\exp(-y^2/2)/\sqrt{2\pi}$. We also initially take $\theta(y_1,y_2) = 1$,
and comment on other choices at the end of the section.

\begin{figure}[t]
\begin{center}
\includegraphics[width=\textwidth]{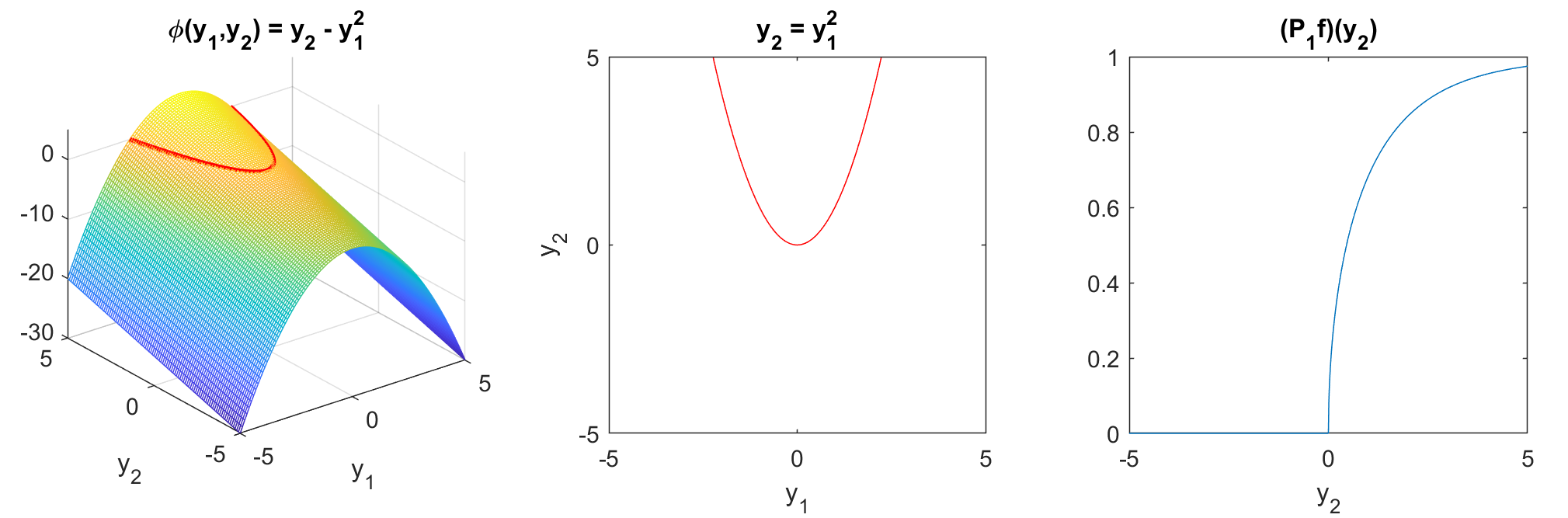}
\end{center}
\caption{Illustrations for Example~\ref{eg1}.} \label{fig:eg1}
\end{figure}

\begin{GKSexample} \label{eg1}
In this example we take
\[
  \phi(y_1, y_2 ) \,=\, y_2 -  y_1^2,
\]
see Figure~\ref{fig:eg1} (left). The zero set of this function is the
parabolic curve $y_2 = y_1^2$, see Figure~\ref{fig:eg1} (middle). The
positivity set of $\phi$ (i.e., the set for which $f(\bsy)$ defined by
\eqref{problem2} is non-zero) is the open region above the parabola.

If we take the special variable to be $y_2$ (i.e., if we take $j=2$) then
the monotonicity condition~\eqref{monotone} is satisfied, and the
preintegration step is truly smoothing: specifically, we see that
\[
 (P_2 f)(y_1) \,=\, \int_{y_1^2}^\infty \rho(y_2)\,\rd y_2
 \,=\, 1-\Phi(y_1^2),
\]
where $\Phi(x) := \int_{-\infty}^x \rho(y) \,\rd y$ is the standard normal
cumulative distribution. Thus $(P_2 f)(y_1)$ is a smooth function for
all $y_1 \in \bbR$, and $I_2f$ is the integral of a smooth integrand over
the real line,
\[
 I_2 f \,=\, \int_{-\infty}^\infty (P_2 f)(y_1)\, \rho(y_1) \,\rd y_1
 \,=\, \int_{-\infty}^\infty \big(1-\Phi(y_1^2)\big)\, \rho(y_1) \,\rd y_1.
\]

If on the other hand we take the special variable to be $y_1$ (i.e., take
$j=1$) then we have
\[
 (P_1 f)(y_2) \,=\,
 \begin{cases}
 0 & \mbox{if  } y_2 \le 0,\\
 \displaystyle\int_{-\sqrt{y_2}}^{\sqrt{y_2}} \rho(y_1)\,\rd y_1
 \,=\, \Phi(\sqrt{y_2}) - \Phi(-\sqrt{y_2})
 & \mbox{if  } y_2 > 0.
 \end{cases}
\]
The graph of $(P_1 f)(y_2)$, shown in Figure~\ref{fig:eg1} (right),
reveals that there is a singularity at $y_2= 0$. To see the nature of the
singularity, note that since $\rho(y_1) = \rho(0)\exp(-y_1^2/2) = \rho(0)
+ \calO(y_1^2)$ as $y_1 \to 0$, we can write
\begin{equation}\label{example1P1}
 (P_1 f)(y_2) \,=\,
 \begin{cases}
 0 & \mbox{if  } y_2 \le 0,\\
 \displaystyle\int_{-\sqrt{y_2}}^{\sqrt{y_2}} \rho(y_1)\,\rd y_1
 \,=\, 2\sqrt{y_2}\, \rho(0) + \calO\big(y_2^{3/2}\big)
 & \mbox{if  } y_2 > 0.
 \end{cases}
\end{equation}
Thus in this simple example $(P_1 f)(y_2) $ is not at all a smooth
function of $y_2$, having a square-root singularity, and hence an infinite
one-sided derivative, at $y_2 = 0$.
\end{GKSexample}

\begin{figure}[t]
\begin{center}
\includegraphics[width=\textwidth]{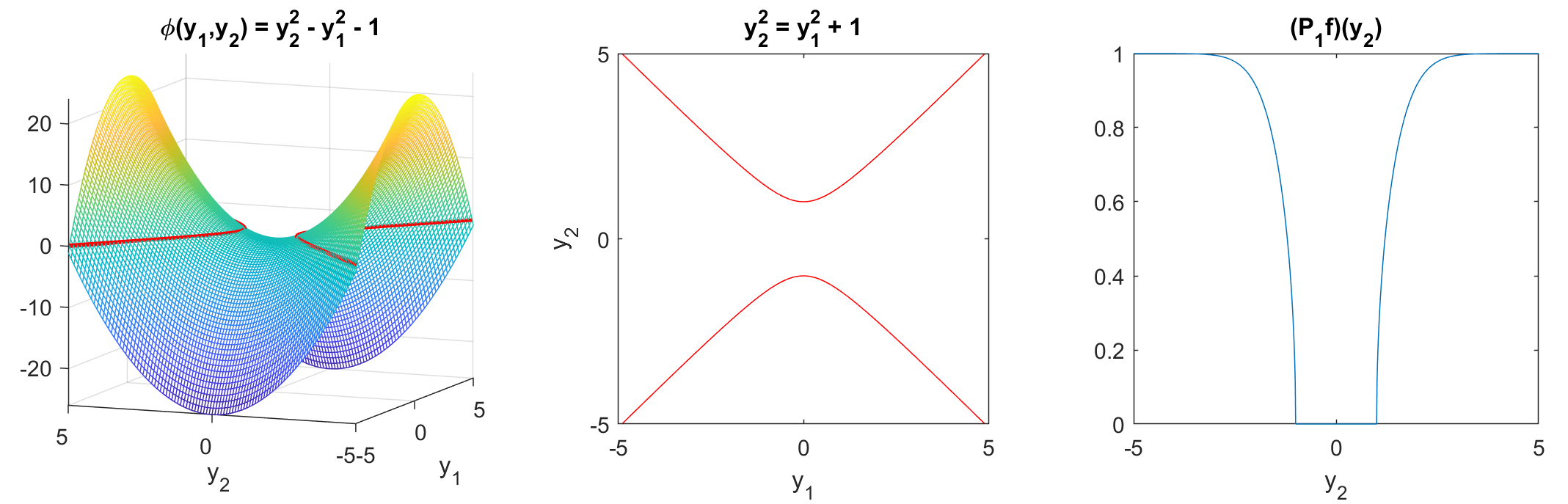}
\end{center}
\caption{Illustrations for Example~\ref{eg2}.} \label{fig:eg2}
\end{figure}

\begin{GKSexample} \label{eg2}
In this example we take
\[
  \phi(y_1,y_2) \,=\, y_2^2 - y_1^2 -1,
\]
see Figure~\ref{fig:eg2} (left). The zero set of $\phi$ is now the
hyperbola $y_2^2 = y_1^2 + 1$, see Figure~\ref{fig:eg2} (middle), and
the positivity set is the union of the open regions above and below the
upper and lower branches respectively. Taking $j = 1$, we see that
monotonicity again fails, and that specifically
\begin{align*} 
 &(P_1f)(y_2) \nonumber\\
 &\,=\,
 \begin{cases}
 0 &\mbox{if } y_2 \in [-1,1], \\
 \displaystyle\int_{-\sqrt{y_2^2 - 1}}^{\sqrt{y_2^2 - 1}} \rho(y_1) \,\rd y_1
 \,=\, 2 \sqrt{y_2^2 - 1} \, \rho(0) + \calO\big((y_2^2-1)^{3/2}\big)
 & \mbox{if } |y_2|>1,
 \end{cases}
\end{align*}
the graph of which is shown in Figure~\ref{fig:eg2} (right). Again we see
square-root singularities, this time two of them.
\end{GKSexample}

\begin{GKSexample} \label{eg3}
Here we take
\[
  \phi(y_1,y_2) \,=\, y_2^2 - y_1^2,
\]
see Figure~\ref{fig:eg3} (left). The level set is now the pair of lines
$y_2 = \pm y_1$, see Figure~\ref{fig:eg3} (middle), and the
positivity set is the open region above and below the crossed lines. This
time $P_1 f$ is given by
\begin{align*}
 (P_1 f) (y_2)
 \,=\, \int_{-|y_2|}^{|y_2|} \rho(y_1)\,\rd y_1
 \,=\, 2|y_2| \, \rho(0) + \calO\big(|y_2|^3\big),
\end{align*}
revealing in Figure~\ref{fig:eg3} (right) a different kind of singularity
(a simple discontinuity in the first derivative), but one still
unfavorable for numerical integration.
\end{GKSexample}

Example~\ref{eg3} is rather special, in that the preintegration is
performed on a line that touches a saddle at its critical point (the
``flat'' point'' of the saddle). Example~\ref{eg4} below illustrates
another situation, one that is in some ways similar to Example 1, but one
perhaps less likely to be seen in practice.

\begin{figure}[t]
\begin{center}
\includegraphics[width=\textwidth]{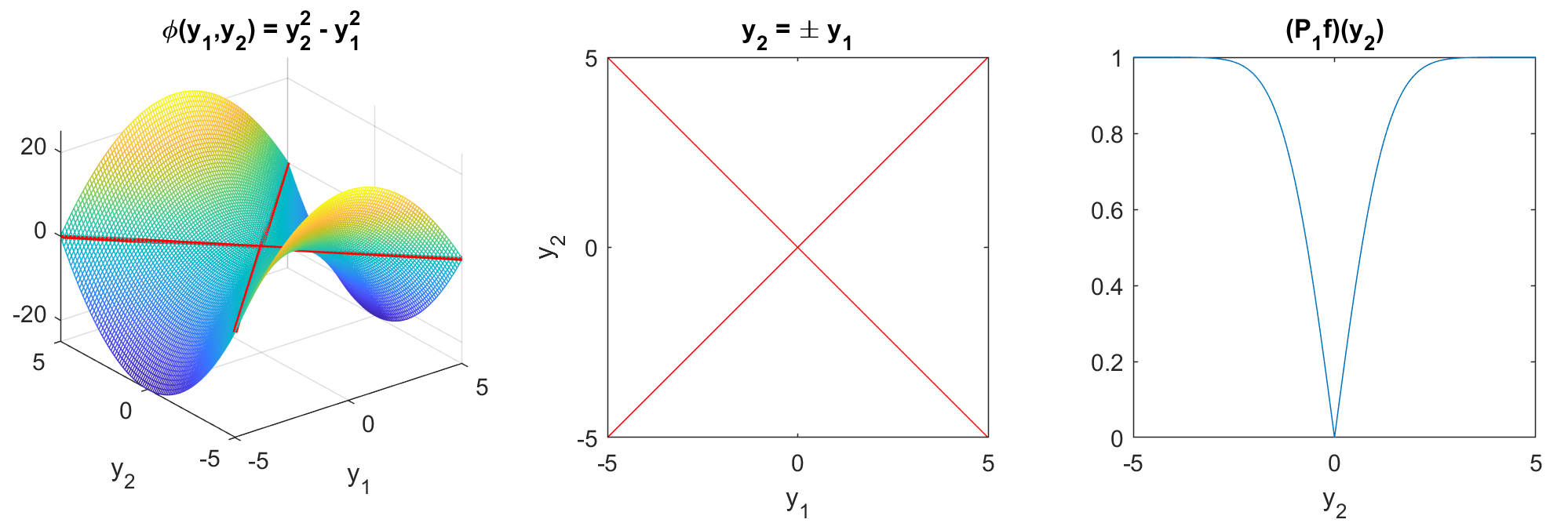}
\end{center}
\caption{Illustrations for Example~\ref{eg3}.} \label{fig:eg3}
\end{figure}

\begin{figure}[t]
\begin{center}
\includegraphics[width=\textwidth]{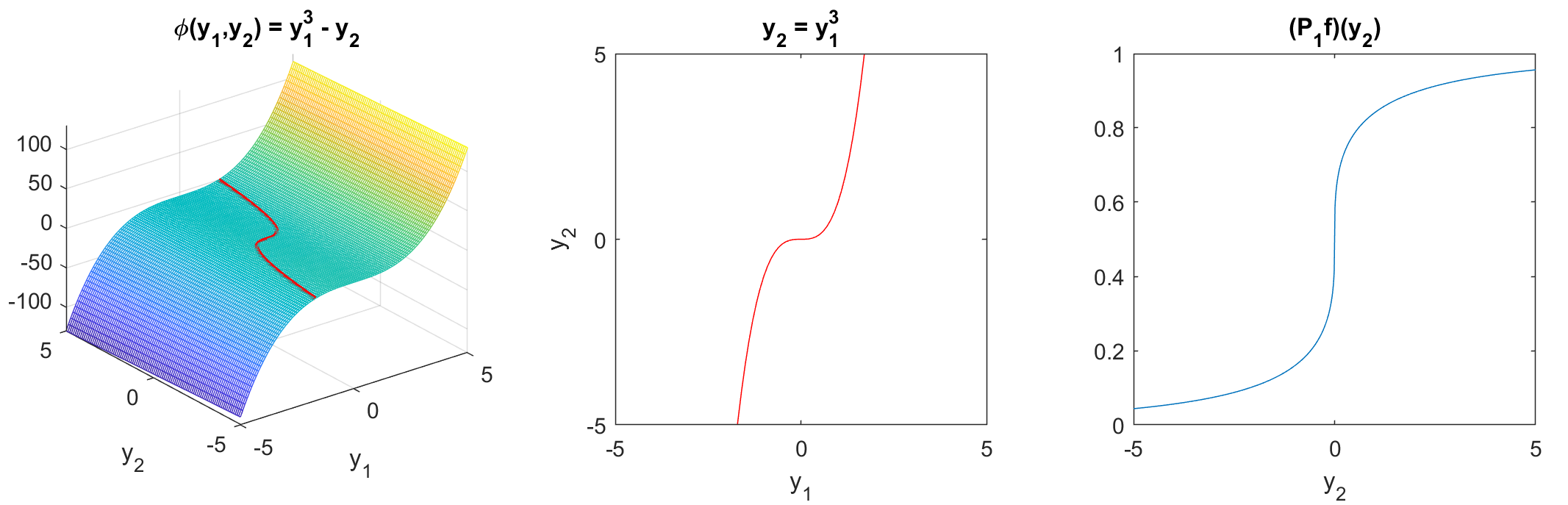}
\end{center}
\caption{Illustrations for Example~\ref{eg4}.} \label{fig:eg4}
\end{figure}

\begin{GKSexample} \label{eg4}
Here we consider
\[
 \phi(y_1, y_2) \,=\, y_1^3 -y_2,
\]
see Figure~\ref{fig:eg4} (left). The zero level set of $\phi$ is the graph
of $y_2 = y_1^3$, see Figure~\ref{fig:eg4} (middle), and the positivity
set is the unbounded domain to the right of the curve. We see that
\begin{align*}
 (P_1 f)(y_2)
 \,=\, \int_{-\infty}^{y_2^{1/3}}\rho(y_1) \,\rd y_1
 &\,=\, \int_{-\infty} ^0\rho(y_1) \,\rd y_1 + \int_0^{y_2^{1/3}}\rho(y_1) \,\rd y_1 \\
 &\,=\, \frac{1}{2} + y_2^{1/3}\rho(0) + \calO\big(|y_2|\big),
\end{align*}
which holds regardless of the sign of $y_2$. The graph of $P_1f$ in
Figure~\ref{fig:eg4} (right) displays the cube-root singularity at $y_2 =
0$.
\end{GKSexample}

In each of the above examples we took $\theta(y_1,y_2)= 1$. Other choices
for $\theta$ are generally not interesting, as they do not affect the
nature of the singularity. An exception is the choice $\theta(y_1,y_2) =
\phi(y_1,y_2)$, which yields a kink rather than a jump because
\[
\phi(\bsy)\, \GKSind(\phi(\bsy)) \,=\, \max(\phi(\bsy),0),
\]
and so leads to a weaker singularity. For example, for $f(y_1,y_2) =
\max(\phi(y_1,y_2),0)$ with $\phi$ as in Example~\ref{eg1}, we obtain
instead of \eqref{example1P1}
\[
 (P_1 f)(y_2)
 \,=\,
 \begin{cases}
 0 & \mbox{if } y_2 \le 0,\\
 \displaystyle\int_{-\sqrt{y_2}}^{\sqrt{y_2}} (y_2-y_1^2)\,\rho(y_1)\,\rd y_1 \,=\,
 \frac{4}{3}\, y_2^{3/2}\,\rho(0) + \calO\big(y_2^{5/2}\big) & \mbox{if } y_2 > 0.
 \end{cases}
\]
With the recognition that kinks lead to less severe singularities
than jumps, but located at the same places, from now on we shall for
simplicity consider only the case $\theta(\bsy) = 1$.

\subsection{Outline of this paper}

In Section~\ref{sec:smooth} we study theoretically the smoothness of the
preintegrated function, assuming that the original $d$-variate function is
$f(\bsy) = \mathrm{ind}(\phi(\bsy) - t)$, with $\phi$ smooth but not
monotone. We prove that the behavior seen in the above informative
examples is typical. Section~\ref{sec:num} contains a
numerical experiment for a  high-dimensional integrand that allows both monotone and
non-monotone choices for the preintegrated variable.
Section~\ref{sec:conc} gives brief conclusions.

\section{Smoothness theorems in $d$ dimensions} \label{sec:smooth}

In the general $d$-dimensional setting we take $\theta \equiv 1$, and
use the general form \eqref{problem2t} with arbitrary $t\in\bbR$.
Thus now we consider
\begin{equation}\label{general}
 f(\bsy) \,:=\, f_t(\bsy) \,:=\, \GKSind\big(\phi(\bsy) - t\big), \quad \bsy \in \bbR^d.
\end{equation}
A natural setting in which $t$ can take any value is in the computation of
the (complementary) cumulative probability distribution of
a random variable $X = \phi(\bsy)$, as in
\cite{GKS21}. In the case of option pricing varying $t$ corresponds to
varying the strike price.

For simplicity in this section we shall take the special
preintegration variable to be $y_1$, so fixing $j = 1$.  The question is
then, assuming that $\phi$ in \eqref{general} has smoothness at least
$C^2(\bbR^d)$, whether or not $P_1f_t$ given by
\[
 (P_1 f_t)(\bsy_{-1})
 \,\coloneqq\, \int_{-\infty}^\infty f_t(y_1,\bsy_{-1})\, \rho(y_1) \,\rd y_1
 \,=\, \int_{-\infty}^\infty \GKSind\big(\phi(\bsy) - t\big)\, \rho(y_1) \,\rd y_1
\]
is a smooth function of $\bsy_{-1}\in\bbR^{d-1}$.

To gain a first insight into the role of the parameter $t$ in
\eqref{general}, it is useful to observe that for the examples in
Section~\ref{sec:eg} a variation in $t$ can change the position and even
the nature of the singularity in $P_1 f_t$, but does not necessarily
eliminate the singularity. For a general $t\in\bbR$ and $\phi$ as in
Example~\ref{eg1}, we easily find that \eqref{example1P1} is replaced by
\begin{equation*} 
 (P_1 f_t)(y_2)
 \,=\,
 \begin{cases}
 0 &\mbox{if } y_2 \le t,\\
 \displaystyle\int_{-\sqrt{y_2-t}}^{\sqrt{y_2-t}} \rho(y_1) \,\rd y_1 &\mbox{if } y_2 > t,
 \end{cases}
\end{equation*}
so that the graph of $P_1 f_t$ is simply translated with the
singularity now occurring at $y_2 = t$ instead of $y_2=0$. The
situation is the same for $\phi$ as in Example~\ref{eg4}.

For $\phi$ as in Example~\ref{eg2}, the choice $t = -1$ recovers
Example~\ref{eg3}, while for $t > -1$ we find
\[
 (P_1f_t)(y_2)
 \,=\,
 \begin{cases}
 0 &\mbox{if } y_2 \in [-\sqrt{1+t},\sqrt{1+t}\,],\\
 \displaystyle\int_{-\sqrt{y_2^2 - 1 - t}}^{\sqrt{y_2^2 - 1 - t}} \rho(y_1) \,\rd y_1
 &\mbox{if } |y_2|>\sqrt{1+t}\,,\\
 \end{cases}
\]
thus in this case $(P_1 f_t)(y_2)$ has square-root singularities at $y_2 =
\pm\sqrt{1+t}$. For the case $t < -1$ (which we leave to the reader) $P_1
f_t$ has no singularity.

In \cite{GKLS18} it was proved that $P_1 f_t$ has the same smoothness
as $\phi$, provided that
\begin{equation}\label{monotone1}
\frac{\partial \phi}{\partial y_1}(\bsy) > 0\qquad\mbox{for all}\quad \bsy \in \bbR^d,
\end{equation}
together with some other technical conditions, see \cite[Theorems~2
and~3]{GKLS18}.

Here we are interested in the situation when $\phi$ is \textbf{not}
monotone increasing with respect to $y_1$ for all $\bsy_{-1}$.  In that
case (unless $\phi$ is always monotone decreasing) there is at least one
point, say $\bsy^* = (y_1^*,\bsy_{-1}^*)\in \bbR^d$, at which
$(\partial \phi/\partial y_1)(\bsy^*) = 0$. At such a point the
gradient of $\phi$ is either zero or orthogonal to the $y_1$ axis. If $t$
in \eqref{general} has the value $t=\phi(\bsy^*)$ then there is
generically a singularity of some kind in $P_1 f_t$ at the point
$\bsy^*_{-1}\in \bbR^{d-1}$. If $t\ne \phi(\bsy^*)$ then there is in
general no singularity in $P_1 f_t$ at the point $\bsy^*_{-1}\in
\bbR^{d-1}$, but if $t$ is not constant then the risk of encountering a
near-singularity is high.

The following theorem states a general result for the existence and the
nature of the singularities induced in $P_1 f_t$ in the common situation
in which the second derivative of $\phi$ with respect to $y_1$ is non-zero
at $\bsy = \bsy^*$, the point at which the first derivative with respect
to $y_1$ is zero.

\begin{theorem} \label{thm:sqrt}
Let $\phi \in C^2(\bbR^d)$, and assume that $\bsy^* =
(y_1^*,\bsy_{-1}^*)\in \bbR^d$ is such that
\begin{align} \label{eq:cond1}
 \frac{\partial \phi}{\partial y_1}(\bsy^*) = 0, \quad
 \frac{\partial^2 \phi}{\partial y_1^2}(\bsy^*) \ne 0,
 \quad\mbox{and}\quad
 \nabla \phi(\bsy^*)\ne \bszero.
\end{align}
Define $t := \phi(\bsy^*)$. Then $(P_1 f_t)(\bsy_{-1})$ has a square-root
singularity at $\bsy^*_{-1} \in \bbR^{d-1}$, similar to those in
Examples~\ref{eg1} and~\ref{eg2}, along any line in $\bbR^{d-1}$ through
$\bsy^*_{-1}$ that is not orthogonal to $\nabla_{\!-1}\phi(\bsy^*) :=
((\partial\phi/\partial y_2)(\bsy^*),(\partial\phi/\partial
y_3)(\bsy^*),\ldots,(\partial\phi/\partial y_d)(\bsy^*))$.
\end{theorem}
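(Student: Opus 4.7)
The plan is to reduce everything to a local analysis near the critical point $\bsy^*$. I would parameterize the line in $\bbR^{d-1}$ as $\bsy_{-1}(s) := \bsy^*_{-1} + s\bsv$ for some direction $\bsv$, so that the non-orthogonality hypothesis becomes $b := \bsv \cdot \nabla_{\!-1}\phi(\bsy^*) \ne 0$. Writing $a := (\partial^2 \phi/\partial y_1^2)(\bsy^*) \ne 0$ and $\zeta_1 := y_1 - y_1^*$, and invoking the second-order Taylor expansion of $\phi$ at $\bsy^*$ together with $\phi(\bsy^*)=t$ and $(\partial\phi/\partial y_1)(\bsy^*)=0$, I get
\[
 g(\zeta_1,s) \,:=\, \phi(y_1^*+\zeta_1,\,\bsy^*_{-1}+s\bsv)-t
 \,=\, b s \,+\, \tfrac12 a \zeta_1^2 \,+\, r(\zeta_1,s),
\]
with $r(\zeta_1,s)=o(\zeta_1^2+s^2)$ as $(\zeta_1,s)\to(0,0)$. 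This reproduces, in generality, the structure seen in Examples~\ref{eg1} and~\ref{eg2}.

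The core step is to solve $g(\zeta_1,s)=0$ locally in $\zeta_1$ for small $s$. I would first apply the implicit function theorem to the equation $\partial g/\partial\zeta_1=0$, which at the origin vanishes with nonzero derivative $\partial^2 g/\partial\zeta_1^2=a$, producing a unique $C^1$ critical curve $\zeta_1^0(s)$ with $\zeta_1^0(0)=0$. Differentiating and using $(\partial g/\partial\zeta_1)(\zeta_1^0(s),s)=0$ yields $M(s):=g(\zeta_1^0(s),s)=bs+\calO(s^2)$. Taylor-expanding $g(\cdot,s)$ about its critical point then gives $g(\zeta_1,s)=M(s)+\tfrac12 a(\zeta_1-\zeta_1^0(s))^2(1+o(1))$, so $g(\cdot,s)=0$ has exactly the two nearby roots
\[
 \zeta_1^\pm(s) \,=\, \zeta_1^0(s) \,\pm\, \sqrt{-2M(s)/a}\,(1+o(1))
\]
precisely when $aM(s)<0$, and no root near $\zeta_1=0$ otherwise.

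Finally, I would write $(P_1 f_t)(\bsy_{-1}(s))=I_\delta(s)+J_\delta(s)$, splitting the $y_1$-integration at $|y_1-y_1^*|=\delta$ for some small $\delta>0$. Depending on the signs of $a$ and $b$, the local positivity set inside $(-\delta,\delta)$ is either an interval of length $2\sqrt{2|bs|/|a|}(1+o(1))$, or $(-\delta,\delta)$ with such an interval removed, or all of $(-\delta,\delta)$, or empty; in every case its measure contributes a term
\[
 \pm\, 2\rho(y_1^*)\,\sqrt{2|bs|/|a|}\,\GKSind(\sigma s>0) \,+\, o\bigl(\sqrt{|s|}\bigr)
\]
to $I_\delta(s)$ for the appropriate sign $\sigma\in\{+1,-1\}$, giving the claimed one-sided square-root cusp at $s=0$. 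The principal obstacle is to argue that $J_\delta$ is smooth at $s=0$, so that this cusp survives in the full integral. For this I would use that $(\partial^2\phi/\partial y_1^2)(\bsy^*)\ne 0$ implies, via the implicit function theorem, that the zero set of $\partial\phi/\partial y_1$ in a full neighborhood of $\bsy^*$ is a smooth graph $y_1=\eta(\bsy_{-1})$ through $\bsy^*$; hence for $\delta$ and $|s|$ sufficiently small, every boundary point of the positivity set outside the slab $\{|y_1-y_1^*|<\delta\}$ has $\partial\phi/\partial y_1\ne 0$ and therefore moves $C^1$-smoothly with $\bsy_{-1}$, which—combined with the rapid decay of $\rho$ at infinity—makes $J_\delta$ smooth in $s$ near $0$ (after further shrinking the neighborhood, if necessary, to exclude any other level-$t$ critical points of $\phi(\cdot,\bsy^*_{-1})$). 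Combining both pieces yields the square-root singularity of $(P_1 f_t)(\bsy_{-1})$ at $\bsy^*_{-1}$ along the chosen line.
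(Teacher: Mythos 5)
Your proof is correct and follows essentially the same strategy as the paper: reduce to a one-parameter family along the chosen line, Taylor-expand to second order at $\bsy^*$ using \eqref{eq:cond1}, locate the local level set $\{\phi=t\}$ as a parabola-like curve, deduce that its cross-section in $y_1$ is an interval of half-width $\sqrt{2|bs|/|a|}\,(1+o(1))$ on one side of $s=0$ and empty on the other, and multiply by $\rho(y_1^*)$. The mechanics are organised a bit differently: the paper assumes (WLOG) $\bsv=\bse_2$, applies the implicit function theorem to $\phi(y_1,\zeta(y_1))=t$ to obtain $y_2=\zeta(y_1)$ with $\zeta'(y_1^*)=0$, $\zeta''(y_1^*)\ne 0$, and then inverts the Taylor expansion of $\zeta$; you instead apply the implicit function theorem to $\partial g/\partial\zeta_1=0$ to track the fibrewise critical point $\zeta_1^0(s)$ and minimum value $M(s)=bs+\calO(s^2)$, and expand $g(\cdot,s)$ around its critical point. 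These are dual views of the same quadratic picture and yield the same leading term. One genuine refinement in your version: you explicitly split $(P_1f_t)(\bsy_{-1}(s))=I_\delta(s)+J_\delta(s)$ and argue that the far-field piece $J_\delta$ is $C^1$ near $s=0$, whereas the paper computes only ``the contribution from a neighbourhood of $y_1^*$'' and leaves the remainder implicit; your attention to $J_\delta$ (including the caveat about excluding other level-$t$ critical points) closes an informality in the paper's write-up, though your argument for it is itself only sketched. One small slip worth fixing: in the second-order Taylor expansion of $g$ the remainder is not $o(\zeta_1^2+s^2)$ as written — the cross terms $\zeta_1 s$ and $s^2$ are genuine quadratic contributions and should appear explicitly (or $r$ should be described as $\calO(\zeta_1^2+s^2)$ minus the displayed $\tfrac12 a\zeta_1^2$ term); this does not affect your conclusions, since after substituting $\zeta_1^0(s)=\calO(s)$ they are absorbed into $M(s)=bs+\calO(s^2)$, but the displayed formula is not literally the Taylor expansion.
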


\begin{proof}
Since $\nabla \phi(\bsy^*)$ is not zero, and has no component in the
direction of the $y_1$ axis, it follows that $\nabla \phi(\bsy^*)$ can be
written as $(0,\nabla_{\!-1}\phi(\bsy^*))$, where
$\nabla_{\!-1}\phi(\bsy^*)= \nabla_{\!-1}\phi(y_1^*,\bsy_{-1}^*)$ is a
non-zero vector in $\bbR^{d-1}$
orthogonal to the $y_1$ axis. Note that as $\bsy_{-1}$ changes in a
neighborhood of $\bsy_{-1}^*$, the function $\phi(y_1^*,\bsy_{-1})$ is
increasing in the direction $\nabla_{\!-1}\phi(\bsy^*)$, and also in the
direction of an arbitrary unit vector $\bsz$ in $\bbR^{d-1}$ that has a
positive inner product with $\nabla_{\!-1}\phi(\bsy^*)$.  Our aim now is
to explore the nature of $P_1 f_t$ on the line through
$\bsy_{-1}^*\in\bbR^{d-1}$ that is parallel to any such unit vector
$\bsz$.

For simplicity of presentation, and without loss of generality, we assume
from now on that the unit vector $\bsz$ points in the direction of the
positive $y_2$ axis. This allows us to write $\bsy = (y_1, y_2, y^*_3,
\ldots,y^*_d) =: (y_1, y_2)$, temporarily ignoring all components other
than the first two. In this $2$-dimensional setting we know that
\[
 \frac{\partial \phi}{\partial y_1}(y_1^*,y_2^*) = 0, \quad
 \frac{\partial \phi}{\partial y_2}(y_1^*,y_2^*) > 0,
 \quad\mbox{and}\quad
 \frac{\partial^2 \phi}{\partial y_1^2}(y_1^*,y_2^*) \ne 0.
\]
Since $(\partial \phi/\partial y_2)(y_1,y_2)$ is continuous and positive
at $(y_1,y_2) = (y_1^*,y_2^*)$, it follows that $(\partial \phi/\partial
y_2)(y_1,y_2)$ is positive on the rectangle $[y^*_1 -\delta, y^*_1 +
\delta] \times [y^*_2 -\delta, y^*_2 + \delta]$ for sufficiently small
$\delta > 0$. Since $\phi(y_1,y_2)$ is increasing with respect to $y_2$ on
this rectangle, it follows that for each $y_1 \in [y^*_1 - \delta,
y^*_1 + \delta]$ there is at most one value of $y_2$ such that
$\phi(y_1,y_2) = t$, and further, that for $|y_1 - y^*_1|$ sufficiently
small there is exactly one value of $y_2$ such that $\phi(y_1,y_2) = t$.
For that unique value we write $y_2 = \zeta(y_1)$, hence by construction
we have $\phi(y_1, \zeta(y_1))= t$, and $\zeta(y^*_1) = y^*_2$.

From the implicit function theorem (or by implicit differentiation of
$\phi(y_1, \zeta(y_1)) = t$ with respect to $y_1$) we obtain
\begin{align} \label{eq:diff1}
 \zeta'(y_1)
 \,=\, -\, \frac{(\partial\phi/\partial y_1)(y_1,\zeta(y_1))}
         {(\partial\phi/\partial y_2)(y_1,\zeta(y_1))},
\end{align}
in which the denominator is non-zero in a neighborhood of $y^*_1$.  From
this it follows that
\[
 \zeta'(y^*_1) \,=\, 0.
\]
Differentiating \eqref{eq:diff1} using the product rule and the chain rule
and then setting $y_1 = y^*_1$ (so that several terms vanish), we obtain
\[
 \zeta''(y^*_1) \,=\, -\, \frac{(\partial^2\phi/\partial y_1^2)(y^*_1, y^*_2)}
 {(\partial\phi/\partial y_2)(y^*_1, y^*_2)},
\]
which by assumption is non-zero. Below we assume that $(\partial^2
\phi/\partial y_1^2)(y^*_1, y^*_2) < 0$, from which it follows that
$\zeta''(y^*_1)$ is positive; the case 
$(\partial^2 \phi/\partial y_1^2)(y^*_1, y^*_2) > 0$ is similar. Taylor's
theorem with remainder gives
\begin{align}\label{Taylor}
 \zeta(y_1)
 &\,=\, \zeta(y^*_1) + \tfrac{1}{2}(y_1 - y^*_1)^2\, \zeta''(y^*_1)\,(1 + o(1)) \nonumber\\
 &\,=\, y^*_2 + \tfrac{1}{2}(y_1 - y^*_1)^2\, \zeta''(y^*_1)\,(1 + o(1)),
\end{align}
where $o(1) \to 0$ as $|y_1 - y^*_1| \to 0$, thus $\zeta(y_1)$ is a convex
function in a neighborhood of $y^*_1$.

Given $y_2$ in a neighborhood of $y^*_2$, our task now is to evaluate the
contribution to the integral
\[
 (P_1 f_t)(y_2) \,=\, \int_{-\infty}^\infty \GKSind\big(\phi(y_1,y_2) - t\big)
 \rho(y_1) \,\rd y_1
\]
from a neighborhood of $y^*_1$. Thus we need to find the set of $y_1$
values in a neighborhood of $y^*_1$ for which $\phi(y_1,y_2) > t$. Because
of \eqref{Taylor}, the set is either empty, or is the open interval with
extreme points given by the solutions of $\zeta(y_1) = y_2$, i.e.,
\[
 y^*_2 + \tfrac{1}{2}(y_1 - y^*_1)^2\, \zeta''(y^*_1)\,(1 + o(1)) \,=\, y_2,
\]
implying
\[
 (y_1 - y^*_1)^2
 \,=\, \frac{2(y_2 - y^*_2)}{\zeta''(y^*_1)\,(1 + o(1))}
 \,=\, \frac{2(y_2 - y^*_2)}{\zeta''(y^*_1)} (1 + o(1)).
\]
There is no solution for $y_2 < y^*_2$, while for $y_2 > y^*_2$ the
solutions are
\[
 y_1 \,=\, y^*_1 \pm c \sqrt{y_2 - y^*_2}\, (1 + o(1)),
\]
with $c:= \sqrt{2/\zeta''(y^*_1)}$. Thus the contribution to $P_1
f_t(y_2)$ from the neighborhood of $y^*_2$ is zero for $y_2 < y^*_2$, and
for $y_2 > y^*_2$ is
\[
 \int_{y^*_1 - c\sqrt{y_2 - y^*_2}\,(1+o(1))}^{y^*_1 + c\sqrt{y_2 - y^*_2}\,(1+o(1))} \rho(y_1)\,\rd y_1
 \,=\, 2 c\sqrt{y_2 - y^*_2} \,\rho(y^*_1) \,(1 + o(1)).
\]
Thus there is a singularity in $P_1 f_t$ of exactly the same character as
that in Examples~\ref{eg1} and~\ref{eg2}.
\end{proof}

\begin{remark}
\label{rem:sqrt}
We now return to consider the examples in Section~\ref{sec:eg} in the
context of Theorem~\ref{thm:sqrt}.

\begin{itemize}
\item For $\phi$ as in Example~\ref{eg1}, we have $(\partial
    \phi/\partial y_1)(y_1,y_2) = -2y_1$, $(\partial^2 \phi/\partial
    y_1^2)(y_1,y_2) = -2\ne 0$, and $\nabla\phi(y_1,y_2) =
    (-2y_1,1)\ne (0,0)$. Thus \eqref{eq:cond1} holds, e.g., with
    $\bsy^* = (0,0)$ and $t = \phi(\bsy^*) = 0$, and $P_1 f_t$ indeed
    displays the predicted square-root singularity at $y_2 = 0$, see Figure~\ref{fig:eg1}.

\item For $\phi$ as in Example~\ref{eg2}, we have $(\partial
    \phi/\partial y_1)(y_1,y_2) = -2y_1$, $(\partial^2 \phi/\partial
    y_1^2)(y_1,y_2) = -2\ne 0$, and $\nabla\phi(y_1,y_2) =
    (-2y_1,2y_2)$. Thus \eqref{eq:cond1} holds, e.g., with $\bsy^* =
    (0,\pm 1)$ and $t = \phi(\bsy^*) = 0$, and $P_1 f_t$ indeed shows
    the predicted square-root singularities at $y_2 = \pm 1$, see Figure~\ref{fig:eg2}.

\item For $\phi$ as in Example~\ref{eg3}, we have the same derivative
    expressions as in Example~\ref{eg2}. Thus \eqref{eq:cond1} holds,
    e.g., again with $\bsy^* = (0,\pm 1)$, but now $t = \phi(\bsy^*) =
    1$, and we effectively recover Example~\ref{eg2} with square-root
    singularities for $P_1 f_t$ at $y_2 = \pm 1$.
    However, if we consider instead the point $\bsy^\dagger = (0,0)$
    and $t = \phi(\bsy^\dagger)=0$, as in Figure~\ref{fig:eg3}, then
    we have $\nabla\phi(\bsy^\dagger) = 0$ so the non-vanishing
    gradient condition in \eqref{eq:cond1} fails and
    Theorem~\ref{thm:sqrt} does not apply at this point
    $\bsy^\dagger$. In this case we actually observe an absolute-value
    singularity for $P_1 f_t$ at $y_2=0$ rather than a square-root
    singularity.

\item For $\phi$ as in Example~\ref{eg4}, we have $(\partial
    \phi/\partial y_1)(y_1,y_2) = 3y_1^2$, $(\partial^2 \phi/\partial
    y_1^2)(y_1,y_2) = 6y_1$, and $\nabla\phi(y_1,y_2) = (3y_1^2,-1)\ne
    (0,0)$. It is impossible to satisfy both the first and second
    conditions in \eqref{eq:cond1} so Theorem~\ref{thm:sqrt} does not
    apply anywhere. In particular, at the point $\bsy^\dagger = (0,0)$
    and $t=\phi(\bsy^\dagger)=0$, as in Figure~\ref{fig:eg4},
    we have $(\partial^2 \phi/\partial y_1^2)(\bsy^\dagger) = 0$,
    and in consequence (given that the
    third derivative does not vanish) $P_1 f_t$ has a cube-root
    singularity at $y_2 = 0$ rather than a square-root singularity.
\end{itemize}
\end{remark}

From Theorem~\ref{thm:sqrt} one might suspect, because $t$ in the theorem
has the particular value $t = \phi(\bsy^*)$, that singularities of this
kind are rare. However, in the following theorem we show that values of $t
\in \bbR$ at which singularities occur in $P_1 f_t$ are generally not
isolated. This is essentially because points at which
$(\partial\phi/\partial y_1)(\bsy) = 0$ are themselves generally not
isolated.

\begin{theorem} \label{thm:not-iso}
Let $\phi \in C^2(\bbR^d)$, and assume that
$\bsy^* \in \bbR^d$ is such that
\begin{align}\label{eq:assumptions2}
 \frac{\partial \phi}{\partial y_1}(\bsy^*) = 0, \quad
 \nabla \phi(\bsy^*)\ne \bszero,\quad\mbox{and}\quad
  \nabla \frac{\partial \phi}{\partial y_1}(\bsy^*)\ne \bszero,
\end{align}
with $\nabla \phi(\bsy^*)$ not parallel to $ \nabla (\partial
\phi/\partial y_1)(\bsy^*)$. Then for any $t$ in some open interval containing
$\phi(\bsy^*)$ there exists a point $\bsy^{(t)} \in \bbR^d$
in a neighborhood of $\bsy^*$ at which
\begin{align}
\label{eq:y^t}
 \phi(\bsy^{(t)}) = t, \quad
 \frac{\partial \phi}{\partial y_1}(\bsy^{(t)}) = 0,
 \quad\mbox{and}\quad
 \nabla \phi(\bsy^{(t)})\ne \bszero.
\end{align}
Moreover, if we assume also that $(\partial^2 \phi/\partial y_1^2)(\bsy^*)
\ne 0$, then the preintegrated quantity  $(P_1 f_t) (\bsy_{-1})$ has a
square-root singularity at $\bsy^{(t)}_{-1} \in \bbR^{d-1}$ along any line
in $\bbR^{d-1}$ through $\bsy^{(t)}_{-1}$ that is not orthogonal to
$\nabla\phi(\bsy^{(t)})$.
\end{theorem}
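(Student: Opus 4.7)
The plan is to introduce the auxiliary $\bbR^2$-valued map
\[
\Psi(\bsy) \,:=\, \Bigl(\phi(\bsy),\, \frac{\partial \phi}{\partial y_1}(\bsy)\Bigr),
\quad \bsy \in \bbR^d,
\]
and show that the hypotheses of the theorem force $\Psi$ to be a submersion at $\bsy^*$. The $2 \times d$ Jacobian of $\Psi$ at $\bsy^*$ has as its two rows precisely $\nabla \phi(\bsy^*)$ and $\nabla (\partial\phi/\partial y_1)(\bsy^*)$, which by assumption \eqref{eq:assumptions2} are both non-zero and not parallel, hence linearly independent, so the Jacobian has full row rank~$2$.

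Next I would apply the implicit function theorem to $\Psi$: pick any two columns of this Jacobian forming an invertible $2\times 2$ sub-block and solve $\Psi(\bsy) = (t,0)$ for the corresponding pair of coordinates as $C^1$ functions of the remaining $d-2$ coordinates, with those remaining coordinates fixed at their values in $\bsy^*$. This produces an open neighborhood $U$ of $\bsy^*$ in $\bbR^d$ together with an open neighborhood of $\Psi(\bsy^*) = (\phi(\bsy^*), 0)$ in $\bbR^2$ contained in $\Psi(U)$. In particular there is an open interval $J \ni \phi(\bsy^*)$ such that for every $t \in J$ the preimage $\Psi^{-1}(t,0) \cap U$ is non-empty, and any $\bsy^{(t)}$ in this preimage satisfies $\phi(\bsy^{(t)}) = t$ and $(\partial\phi/\partial y_1)(\bsy^{(t)}) = 0$, giving the first two conditions in \eqref{eq:y^t}. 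Continuity of $\nabla \phi$ together with $\nabla \phi(\bsy^*) \ne \bszero$ allow me to shrink $U$ (and hence $J$) so that $\nabla \phi$ stays non-zero on $U$, yielding the third condition.

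For the second assertion, by continuity the hypothesis $(\partial^2 \phi/\partial y_1^2)(\bsy^*) \ne 0$ persists in a neighborhood of $\bsy^*$, so a further shrinking of $U$ (and $J$) ensures $(\partial^2 \phi/\partial y_1^2)(\bsy^{(t)}) \ne 0$ for every $t \in J$. Then $\bsy^{(t)}$ satisfies all three conditions \eqref{eq:cond1} of Theorem~\ref{thm:sqrt} with $t$ in place of $\phi(\bsy^*)$, and the square-root singularity claim follows immediately. Note that since $(\partial\phi/\partial y_1)(\bsy^{(t)}) = 0$, the vector $\nabla \phi(\bsy^{(t)})$ has zero first component, so ``not orthogonal to $\nabla\phi(\bsy^{(t)})$'' for a direction in $\bbR^{d-1}$ (embedded with zero first coordinate) coincides with ``not orthogonal to $\nabla_{\!-1}\phi(\bsy^{(t)})$'', exactly matching the hypothesis of Theorem~\ref{thm:sqrt}.

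The only real subtlety is the rank-$2$ step for $\Psi$, which is precisely where the non-parallel hypothesis enters; without it the critical set $\{(\partial\phi/\partial y_1)(\bsy) = 0\}$ might still be a $(d-1)$-dimensional manifold near $\bsy^*$, but $\phi$ could fail to vary non-trivially along it, so $\phi(\bsy^*)$ would not need to be an interior point of the image of $\phi$ restricted to the critical set, and the conclusion could fail. Everything else is continuity plus an invocation of Theorem~\ref{thm:sqrt}, so I do not expect further obstacles.
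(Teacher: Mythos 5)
Your proof is correct and captures exactly the same mathematical content as the paper's argument, merely in a different idiom. The paper identifies the zero set of $\psi := \partial\phi/\partial y_1$ as a $(d-1)$-manifold near $\bsy^*$ (using $\nabla\psi(\bsy^*)\ne\bszero$), observes that the projection of $\nabla\phi(\bsy^*)$ onto its tangent hyperplane is non-zero because $\nabla\phi(\bsy^*)$ is not parallel to $\nabla\psi(\bsy^*)$, and concludes that moving along the manifold in that direction sweeps $\phi$ through an open interval of values. You instead pass to the joint map $\Psi = (\phi,\psi): \bbR^d \to \bbR^2$ and note that the same two gradients are precisely the rows of its Jacobian at $\bsy^*$, whose linear independence makes $\Psi$ a submersion and hence an open map near $\bsy^*$, so every value $(t,0)$ close to $(\phi(\bsy^*),0)$ is attained. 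These are two equivalent packagings of the same key observation, namely that the non-parallelism hypothesis makes $\phi$ non-degenerate on the constraint hypersurface $\{\psi=0\}$; your version (really the inverse function theorem applied to the restriction of $\Psi$ to the two well-chosen coordinates, rather than the implicit function theorem as you phrase it, though the substance is right) is perhaps a little more streamlined than the paper's verbal ``search along the manifold.'' The continuity-based shrinking to preserve $\nabla\phi\ne\bszero$ and $(\partial^2\phi/\partial y_1^2)\ne 0$ at $\bsy^{(t)}$, and the invocation of Theorem~\ref{thm:sqrt} for the square-root singularity, are exactly the steps the paper also takes.
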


\begin{proof}
It is convenient to define $\psi(\bsy):= (\partial \phi/\partial
y_1)(\bsy)$, which by assumption is a real-valued $C^1(\mathbb{R}^d)$
function that satisfies
\[
\psi(\bsy^*) = 0 \quad \mbox{and}\quad \nabla \psi(\bsy^*) \ne \bszero.
\]
We need to show that for $t$ in some open interval containing
$\phi(\bsy^*)$ there exists $\bsy^{(t)}\in \mathbb{R}^d$ in a neighborhood
of $\bsy^*$ at which
\begin{align*}
 \phi(\bsy^{(t)}) = t, \quad
 \psi(\bsy^{(t)}) = 0,
 \quad\mbox{and}\quad
 \nabla \phi(\bsy^{(t)})\ne \bszero.
\end{align*}
Clearly, we can confine our search for $\bsy^{(t)}$ to the zero level set
of $\psi$, that is, to the solutions of
\begin{equation*} 
\psi(\bsy) = 0, \quad \bsy \in \mathbb{R}^d.
\end{equation*}
Since $\nabla \psi(\bsy)$ is continuous and  non-zero in a neighborhood of
$\bsy^*$, the zero level set of $\psi$ is a manifold of dimension $d-1$
near $\bsy^*$, whose tangent hyperplane at $\bsy^*$ is orthogonal to
$\nabla \psi (\bsy^*)$, see, e.g., \cite[Chapter 5]{Lee00}.  On this
hyperplane there is a search direction starting from $\bsy^*$ for which
$\phi(\bsy)$ has a maximal rate of increase, namely the direction of the
orthogonal projection of $\nabla \phi(\bsy^*)$ onto the tangent
hyperplane, noting that this is a non-zero vector because
$\nabla\phi(\bsy^*)$ is not parallel to $\nabla \psi (\bsy^*)$. Setting
out from the point~$\bsy^*$ in the direction of positive gradient, the
value of $\phi$ is strictly increasing in a sufficiently small
neighborhood of~$\bsy^*$, while in the direction of negative gradient it
is strictly decreasing. Thus searching on the manifold for a $\bsy^{(t)}$
such that $\phi(\bsy^{(t)}) = t$ will be successful in one of these
directions for $t$ in a sufficiently small open interval containing
$\phi(\bsy^*)$.

Under the additional assumption that $(\partial^2 \phi / \partial
y_1^2)(\bsy^*) \ne 0$, all the conditions of Theorem~\ref{thm:sqrt} are
satisfied with $\bsy^*$ replaced by $\bsy^{(t)}$, noting that because
$\phi \in C^2(\mathbb{R}^d)$ the second derivative is also non-zero in a
sufficiently small neighborhood of $\bsy^*$. This completes the proof.
\end{proof}

\begin{remark}
\label{rem:not-iso} We now show that for $\phi$ as in
Examples~\ref{eg1}--\ref{eg4} the singularities in $P_1 f_t$, with $f_t$
as in \eqref{general}, are not isolated and furthermore, we give the exact
regions in which the singularities exist.

\begin{itemize}
\item For $\phi$ as in Example~\ref{eg1} we may choose $\bsy^* = (0,
    0)$, as in Remark~\ref{rem:sqrt}. Indeed, the gradient of the
    first derivative with respect to $y_1$ is $\nabla ( \partial \phi
    /\partial y_1) (y_1, y_2) = (-2, 0)$, which is not parallel to
    $\nabla \phi (y_1, y_2) = (-2y_1, 1)$ for all $(y_1, y_2) \in
    \R^2$. It follows that \eqref{eq:assumptions2} holds, e.g.,
    at~$\bsy^*=(0,0)$. Hence Theorem~\ref{thm:not-iso} implies that
    for $t$ in some interval around $\phi(\bsy^*) = 0$ there is
    $\bsy^{(t)}= (y_1^{(t)}, y_2^{(t)})$ in a neighborhood of $\bsy^*
    = (0, 0)$ such that $\phi(\bsy^{(t)}) = t$ and \eqref{eq:y^t}
    holds. In particular, there is still a square-root singularity in
    $(P_1 f_t)(y_2)$ at $y_2 = y_2^{(t)}$. We confirm that this is
    indeed the case by taking $\bsy^{(t)} = (0, t)$ and by observing
    that, as is easily verified,  $(P_1f_t) (y_2)$ has a square-root
    singularity at $y_2 = t$ for all real numbers $t$. This
    singularity in $P_1 f_t$ is similar to the singularity depicted in
    Figure~\ref{fig:eg1}, but translated by $t$.

\item For $\phi$ as in Example~\ref{eg3} we can consider $\bsy^* = (0,
    \pm 1)$ as in Remark~\ref{rem:sqrt}. The gradient of the first
    derivative with respect to $y_1$ is $\nabla ( \partial \phi
    /\partial y_1) (y_1, y_2) = (-2, 0)$, which is not parallel to
    $\nabla \phi(y_1, y_2) = (-2y_1, 2y_2)$ for all $(y_1, y_2) \in
    \R^2$ with $y_2 \neq 0$. Thus \eqref{eq:assumptions2} holds, e.g.,
    at $\bsy^* = (0, \pm1)$. Theorem~\ref{thm:not-iso} implies that
    for $t$ in some interval around $ \phi(\bsy^*) = 1$ there is a
    point $\bsy^{(t)}$ in a neighborhood of $\bsy^* = (0,\pm1)$ such
    that $\phi(\bsy^{(t)}) = t$, \eqref{eq:y^t} holds, and
    $(P_1f_t)(y_2)$ has a square-root singularity at $y_2 =
    y_2^{(t)}$. Indeed, for any real number $t> 0$, taking $\bsy^{(t)}
    = (0, \pm \sqrt{t})$ gives $\phi(\bsy^{(t)}) = t$, and it can
    easily be verified that $(P_1 f_t)(y_2)$ has two square-root
    singularities at $y_2 = \pm \sqrt{t}$. In this case the behavior
    of $P_1 f_t$ is similar to Figure~\ref{fig:eg2}, with the location
    of the singularities now depending on $t$.

\item Since $\phi$ from Example~\ref{eg2} is simply a translation of
    Example~\ref{eg3} by $-1$, similar singularities exist for that
    case for $t > -1$.

\item For $\phi$ as in Example~\ref{eg4} the condition
    \eqref{eq:assumptions2} never holds, since $\nabla(\partial
    \phi/\partial y_1)(y_1,y_2)= (0,0)$ whenever $(\partial
    \phi/\partial y_1)(y_1,y_2)= 0$. So no conclusion can be drawn
    from Theorem~\ref{thm:not-iso} in this case.  It is no
    contradiction that, as is easily seen, there is a singularity (of
    cube-root character) in $(P_1 f_t)(y_2)$ at $y_2 = t$ for every
    real number~$t$.

\end{itemize}
\end{remark}

\section{A high-dimensional example} \label{sec:num}

Motivated by applications in computational finance, for a high-dimensional
example we consider the problem of approximating the fair price for a
\emph{digital Asian option}, a problem that can be formulated as an
integral as in \eqref{problem1} with a discontinuous integrand of the form
\eqref{general}. When monotonicity holds, it was shown in \cite{GKLS18}
that preintegration not only has theoretical smoothing benefits, but also
that when followed by a QMC rule to compute the $(d-1)$-dimensional
integral the computational experience can be excellent. On the other hand
that paper provided no insight as to what happens, either theoretically or
numerically, when monotonicity fails. In this section, in contrast, we
will deliberately apply preintegration using a chosen variable for which
the monotonicity condition fails.  We will demonstrate the resulting lack
of smoothness, using the theoretical results from the previous section,
and show that the performance of the subsequent QMC rule can degrade when
the preintegration variable lacks the monotonicity property.

For a given strike price $K$, the payoff for a digital Asian call option
is given by
\[
\mathrm{payoff} \,=\, \GKSind(\phi - K),
\]
where $\phi$ is the average price of the underlying stock over the time period.
Under the Black--Scholes model the time-discretised average is given by
\begin{equation}
\label{eq:phi-opt}
 \phi(\bsy) \,=\, \frac{S_0}{d} \sum_{k = 1}^d
 \exp \bigg((r - \tfrac{1}{2}\sigma^2)\frac{kT}{d} + \sigma \bsA_k \bsy\bigg),
\end{equation}
where $\bsy = (y_k)_{k = 1}^d$ is a vector of i.i.d.\ standard normal
random variables, $S_0$ is the initial stock price, $T$ is the final time,
$r$ is the risk-free interest rate, $\sigma$ is the volatility and $d$ is
the number of timesteps, which is also the dimension of the problem. Note
that in \eqref{eq:phi-opt} we have already made a change of variables to
write the problem in terms of standard normal random variables, by
factorising the covariance matrix of the Brownian motion increments
as $\Sigma = AA^\top$, where the entries of the covariance matrix are
$\Sigma_{k, \ell} = \min(k, \ell) \times T/d$. Then in \eqref{eq:phi-opt},
$\bsA_k$ denotes the $k$th row of this matrix factor.

The fair price of the option is then given by the discounted expected
payoff
\begin{equation}
\label{eq:price}
 e^{-rT}\,\bbE[\mathrm{payoff}] \,=\,
 e^{-rT} \int_{\bbR^d} \GKSind(\phi(\bsy) - K) \rho_d(\bsy) \, \mathrm{d} \bsy\,.
\end{equation}
Letting $f(\bsy) = \GKSind(\phi(\bsy) - K)$, this example clearly fits
into the framework \eqref{general} where $\phi$ is the average stock price
\eqref{eq:phi-opt}, $t$ is the strike price $K$ and each $\rho$ is a
standard normal density.

There are three popular methods for factorising the covariance matrix: the
\emph{standard construction} (which uses the Cholesky factorisation),
\emph{Brownian bridge}, and \emph{principal components} or PCA, see, e.g.,
\cite{Glasserman} for further details. In the first two cases all
components of the matrix $A$ are positive, in which case it is easily seen
by studying the derivative of \eqref{eq:phi-opt} with respect to $y_j$ for
some $j=1,\ldots,d$,
\begin{equation*}
\frac{\partial\phi}{\partial y_j} (\bsy) \,=\,
\frac{S_0}{d} \sum_{k = 1}^d \sigma A_{k, j}
\exp \bigg((r - \tfrac{1}{2}\sigma^2)\frac{kT}{d} + \sigma \bsA_k \bsy\bigg),
\end{equation*}
that $\phi$ is monotone increasing with respect to $y_j$ no matter which
$j$ is chosen.

In contrast, for the PCA construction, which we consider below, the
situation is very different, in that there is only one choice of $j$ for
which $\phi$ is monotone with respect to $y_j$. This is because with PCA
the factorisation of $\Sigma$ employs its eigendecomposition, with the
$j$th column of $A$ being a (scaled) eigenvector corresponding to the
$j$th eigenvalue labeled in decreasing order.
Since the covariance matrix $\Sigma$ is
positive definite the eigenvector corresponding to the largest
eigenvalue has all components positive, thus for $j = 1$ monotonicity of
$\phi$ is achieved. On the other hand, every eigenvector other than the
first is orthogonal to the first, and therefore must have components of
both signs. Given that
\begin{align*}
 & A_{k,j} > 0 \implies \exp(A_{k,j}y_j) \to
 \begin{cases}
 +\infty & \mbox{as } y_j \to +\infty,\\
 0 & \mbox{as } y_j \to -\infty,
 \end{cases}\\
 & A_{k,j} < 0 \implies \exp(A_{k,j}y_j) \to
 \begin{cases}
 0 & \mbox{as } y_j \to +\infty,\\
 +\infty & \mbox{as } y_j \to -\infty,
 \end{cases}
\end{align*}
it follows that for $j \ne 1$ there is at least one term in the sum over
$k$ in \eqref{eq:phi-opt} that approaches $+\infty$ as $y_j\to +\infty$
and at least one other term that approaches $+\infty$ as $y_j \to
-\infty$. Given that all terms in the sum over $k$ in \eqref{eq:phi-opt}
are positive, it follows that for the PCA case and $j \ne 1 $, $\phi$ must
approach $+\infty$ as $y_j \to \pm \infty$, so is definitely not monotone.
Moreover, with respect to each variable $y_j$ the function $\phi$ is
strictly convex, since
\begin{equation*} 
\frac{\partial^2\phi}{\partial y_j^2} (\bsy) \,=\,
\frac{S_0}{d} \sum_{k = 1}^d (\sigma A_{k, j})^2
\exp \bigg((r - \tfrac{1}{2}\sigma^2)\frac{kT}{d} + \sigma \bsA_k \bsy\bigg)
\,>\, 0 \quad \text{for all } \bsy \in \R^d.
\end{equation*}

For definiteness, in the following discussion we denote by $y_2$ the
preintegration variable for which monotonicity fails, and denote the other
variables by $\bsy_{-2} = (y_1, y_3, \ldots, y_d)$.
We now use the results from the previous section to determine the
smoothness, or rather the lack thereof, of $P_2f$ when $f(\bsy) =
\GKSind(\phi(\bsy) - K)$. To do this we will use Theorem~\ref{thm:sqrt}
and Theorem~\ref{thm:not-iso}, where with a slight abuse of notation we
replace $y_1$ by $y_2$ as our special preintegration variable.

First, note that we have already established that $\phi$ is not monotone
with respect to $y_2$, and since $\phi$ is strictly convex with respect to
$y_2$, for a given $\bsy_{-2}$ there exists a unique $y_2^* \in \bbR$ such
that $(\partial \phi/\partial y_2)(y_2^*,\bsy_{-2}) = 0$. Since $\phi$ is
strictly increasing with respect to $y_1$, it follows that $\nabla
\phi(y_2^*,\bsy_{-2}) \neq \bszero$. Furthermore, since
$(\partial^2\phi/\partial y_2^2)(y_2^*,\bsy_{-2}) > 0$,
Theorem~\ref{thm:sqrt} implies that for $K = \phi(y_2^*,\bsy_{-2})$ the
preintegrated function $P_2f$ has a square-root singularity along any line
not orthogonal to $\nabla_{\!-2} \phi(y_2^*,\bsy_{-2})$, with
$\nabla_{\!-2}$ defined analogously to $\nabla_{\!-1}$ in
Theorem~\ref{thm:sqrt}.

Furthermore, Theorem~\ref{thm:not-iso} implies that this singularity is
not isolated. To apply Theorem~\ref{thm:not-iso} we note that we have
already established the first two conditions in \eqref{eq:assumptions2}
(recall again that we now  take $y_2$ as the preintegration variable). We
also have $(\partial^2 \phi / \partial y_2^2)(y_2^*,\bsy_{-2}) > 0$, which
implies $\nabla (\partial \phi / \partial y_2)(y_2^*,\bsy_{-2}) \neq
\bszero$. Moreover, we know that $\nabla (\partial \phi /
\partial y_2)(y_2^*,\bsy_{-2}) $ and $\nabla \phi(y_2^*,\bsy_{-2})$ are not
parallel, since the former has a positive second component while the
latter has a zero second component.

\begin{figure}[t]
\begin{center}
\includegraphics[width=\textwidth]{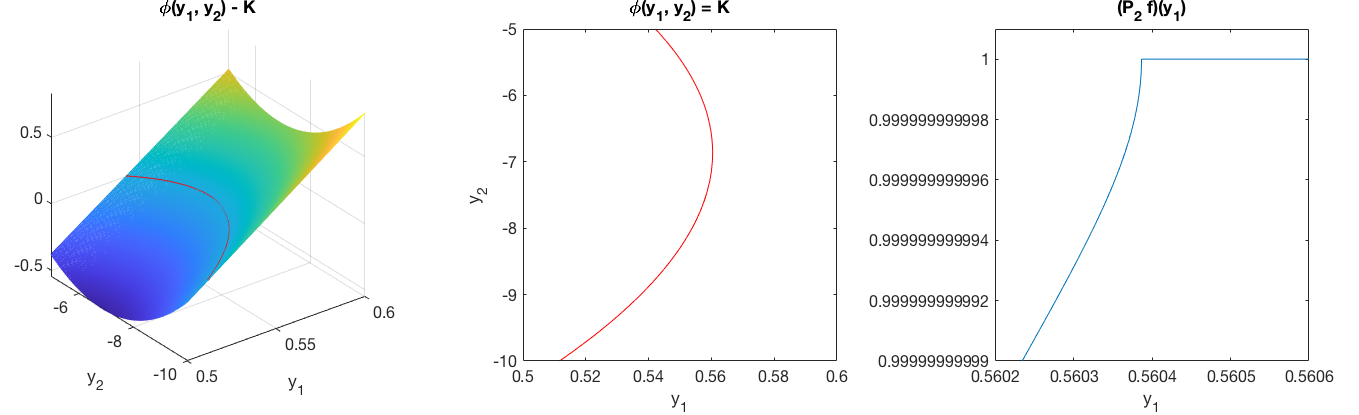}
\end{center}
\caption{Illustrations for digital Asian option in two dimensions.} \label{fig:opt-sing}
\end{figure}

To visualise this singularity, in Figure~\ref{fig:opt-sing} we provide an
illustration of the option in two dimensions. (Note that we consider $d =
2$ here for visualisation purposes only; we have shown already that the
singularity exists for any choice of $d > 1$. Later we present numerical
results for $d = 256$.) Figure~\ref{fig:opt-sing} gives a contour plot of
$\phi(y_1, y_2) - K$ (left), the zero level set of $\phi(y_1, y_2) = K$
(middle) and then the graph of $P_2 f$ (right). As expected, we can
clearly see that $P_2 f$ has a singularity that is of square-root nature.

To perform the preintegration step $P_2f$ in practice,
note that since $\phi$ is strictly convex with respect to $y_2$
for each $\bsy_{-2} \in \bbR^{d - 1}$ there is a single
turning point $y_2^* \in \bbR$ for which $(\partial \phi /
\partial y_2) (y_2^*, \bsy_{-2}) = 0$ and $\phi(y_2^*, \bsy_{-2})$ is a global minimum.
It follows that there are at most \emph{two} distinct points,
$\xi_a(\bsy_{-2}) \leq \xi_b(\bsy_{-2})$, such that
\[
 \phi(\xi_a(\bsy_{-2}), \bsy_{-2}) \,=\, K \,=\, \phi(\xi_b(\bsy_{-2}), \bsy_{-2})\,.
\]
Preintegration with respect to $y_2$ then simplifies to
\begin{align*}
(P_2 f)(\bsy_{-2})
\,&=\, \int_{-\infty}^\infty \GKSind(\phi(y_2, \bsy_{-2}) - K)\, \rho(y_2) \, \mathrm{d} y_2
\\
&=\, \begin{cases}
1 & \text{if } \phi(y_2^*, \bsy_{-2}) \geq K, \\[2mm]
\displaystyle \int_{-\infty}^{\xi_a(\bsy_{-2})} \rho(y_2) \, \mathrm{d} y_2
+ \int_{\xi_b(\bsy_{-2})}^\infty \rho(y_2) \, \mathrm{d} y_2
& \text{otherwise},
\end{cases}
\\
&=\, \begin{cases}
1 & \text{if } \phi(y_2^*, \bsy_{-2}) \geq K, \\[2mm]
\Phi(\xi_a(\bsy_{-2})) + 1 - \Phi(\xi_b(\bsy_{-2})) \hspace{1.5cm}
& \text{otherwise}.
\end{cases}
\end{align*}
In practice, for each $\bsy_{-2} \in \bbR^{d - 1}$ the turning point
$y_2^*$ and the points of discontinuity $\xi_a(\bsy_{-2})$ and
$\xi_b(\bsy_{-2})$ are computed numerically, e.g., by Newton's method.

We now look at how this lack of smoothness affects the performance of
using a numerical preintegration method to approximate the fair price for
the digital Asian option in $d = 256$ dimensions. Explicitly, we
approximate the integral in \eqref{eq:price} by applying a $(d -
1)$-dimensional QMC rule to $P_2 f$. As a comparison, we also present
results for approximating the integral in \eqref{eq:price} by applying the
same QMC rule to $P_1 f$. Recall that $\phi$ is monotone in dimension $1$
and furthermore, it was shown in \cite{GKLS18} that $P_1 f$ is smooth.

For the QMC rule we use a randomly shifted lattice rule based on the
generating vector \texttt{lattice-32001-1024-1048576.3600} from
\cite{Kuo-lattice} using $N = 2^{10}, 2^{11}, \ldots, 2^{19}$ points with
$R = 16$ random shifts. The parameters for the option are $S_0 = \$100$,
$K = \$110$, $T = 1$, $d = 256$ timesteps, $r = 0.1$ and $\sigma = 0.1$.
We also performed a standard Monte Carlo approximation using $R \times N$
points and a plain (without preintegration) QMC approximation using the
same generating vector.

In Figure~\ref{fig:converg-N}, we plot the convergence of the standard
error, which we estimate by the sample standard error over the different
random shifts, in terms of the total number of function evaluations $R
\times N$. We can clearly see that preintegration with respect to $y_2$
produces less accurate results compared to preintegration with respect
to~$y_1$, with errors that are up to an order of magnitude larger for the
higher values of~$N$. We also note that to achieve a given error, say
$10^{-4}$, the number of points needs to be increased tenfold.
Furthermore, we observe that the empirical convergence rate for
preintegration with respect to $y_2$ is $N^{-0.9}$, which is sightly worse
than the rate of $N^{-0.98}$ for preintegration with respect to $y_1$.
Hence, when the monotonicity condition fails, not only does the theory for
QMC fail due to the presence of a singularity, but we also observe worse
results in practice and somewhat slower convergence.

We also plot the error of standard Monte Carlo and QMC approximations, which
behave as expected and are both significantly outperformed by the two preintegration methods.

\begin{figure}[t]
\centering
\includegraphics[scale=0.5,trim={18 4 40 5},clip=true]{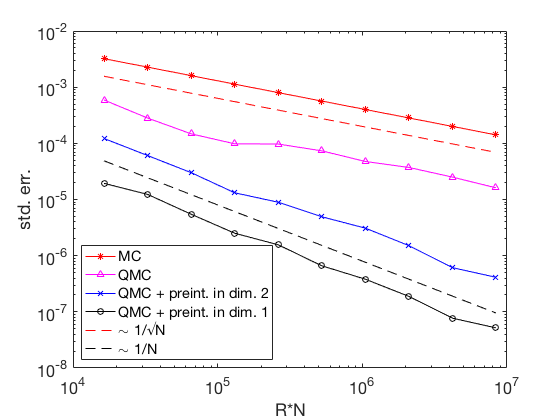}
\caption{Convergence in $N$ for different approximations of the fair price for a digital Asian option.}
\label{fig:converg-N}
\end{figure}

\section{Conclusion} \label{sec:conc}

If the monotonicity property \eqref{monotone1} fails and $f = f_t$ is
defined by \eqref{general} then we have seen that generically there is a
singularity in $P_1 f$ for some values of $t$, and under known conditions
this is true even for values of $t$ in an interval.

It should also be noted that implementation of preintegration is more
difficult if monotonicity fails, since instead of a single integral from
$\xi(\bsy_{-1})$ to $\infty$, as in \eqref{P1psi}, there will in
general be additional finite or infinite intervals to integrate over, all
of whose end points must be discovered by the user for each required value
of $\bsy_{-1}$.

To explore the consequences of a lack of monotonicity empirically, we
carried out in Section~\ref{sec:num} a $256$-dimensional calculation of
pricing a digital Asian option, first by preintegrating with respect to a
variable known to lack the monotonicity property, and then with a variable
where the property holds, with the result that both accuracy and rate of
convergence were observed to be degraded when monotonicity fails.

There is an additional problem of preintegrating with respect to a
variable for which the monotonicity fails, namely that because of the
proven lack of smoothness, the resulting preintegrated function no longer
belongs to the space of $(d-1)$-variate functions of dominating mixed
smoothness of order one, and as a consequence there is at present no
theoretical support for our use of QMC integration for this
$(d-1)$-dimensional integral.

The practical significance of this paper is that effective use of
preintegration is greatly enhanced by the preliminary identification of a
special variable for which the monotonicity property is known to hold. The
paper does not offer guidance on the choice of variable if there is more
than one such variable; in such cases it may be natural to choose the
variable for which preintegration leads to the greatest reduction in
variance.

\section*{Acknowledgements}

The authors acknowledge the support of the Australian Research Council
under the Discovery Project DP210100831.


\begin{thebibliography}{99}
\bibitem{ACN13a}
 N.~Achtsis, R.~Cools, and D.~Nuyens,
 \emph{Conditional sampling for barrier option pricing under the LT method},
 SIAM J.\ Financial Math. \textbf{4} (2013), 327--352.

\bibitem{ACN13b}
 N.~Achtsis, R.~Cools, and D.~Nuyens,
 \emph{Conditional sampling for barrier option pricing under the Heston
 model}, in: J.~Dick, F.Y. Kuo, G.W. Peters, I.H. Sloan (eds.), {M}onte {C}arlo
 and Quasi-{M}onte {C}arlo Methods 2012, pp. 253--269, Springer-Verlag,
 Berlin/Heidelberg (2013).

\bibitem{BST17}
 C.~Bayer, M.~Siebenmorgen, and R.~Tempone,
 \emph{Smoothing the payoff for efficient computation of Basket option
 prices}, Quant.\ Finance \textbf{18} (2018), 491--505.

\bibitem{BG04}
 H.~Bungartz and M.~Griebel,
 \emph{Sparse grids}, Acta Numer.\ \textbf{13} (2004), 147--269.

\bibitem{DKS13}
  J.~Dick, F.~Y. Kuo, and I.~H. Sloan,
  \emph{High dimensional integration -- the quasi-Monte Carlo way},
  Acta Numer.\ 22 (2013), 133--288.

\bibitem{GKS21}
 A.~D.~Gilbert, F.~Y.~Kuo, and I.~H.~Sloan,
 \emph{Approximating distribution functions and densities using
quasi-Monte Carlo methods after smoothing by preintegration},
arXiv:2112.10308, (2021).

\bibitem{Glasserman} P.~Glasserman, Monte Carlo Methods in Financial
    Engineering, Springer-Verlag, Berlin-Heidelberg, (2003).

\bibitem{GlaSta01}
 P.~Glasserman and J.~Staum, \emph{Conditioning on one-step survival for
 barrier option simulations}, Oper.\ Res., \textbf{49} (2001), 923--937.

\bibitem{GKS10} M.~Griebel, F.~Y.~Kuo, and I.~H.~Sloan, \emph{The
    smoothing effect of the ANOVA decomposition},
    J.\ Complexity\ \textbf{26} (2010), 523--551.

\bibitem{GKS13} M.~Griebel, F.~Y.~Kuo, and I.~H.~Sloan, \emph{The
    smoothing effect of integration in $\bbR^d$ and the ANOVA decomposition},
    Math.\ Comp.\ \textbf{82} (2013), 383--400.

\bibitem{GKSnote} M.~Griebel, F.~Y. ~Kuo, and I.~H.~Sloan, \emph{Note on
    ``the smoothing effect of integration in~$\bbR^d$ and the ANOVA
    decomposition''}, Math.\ Comp.\ \textbf{86} (2017), 1847--1854.

\bibitem{GKLS18}  A.~Griewank, F.~Y.~Kuo, H.~Le\"ovey, and I.~H.~Sloan.
    \emph{High dimensional integration of kinks and
    jumps -- smoothing by preintegration},
    J. Comput. Appl. Math. \textbf{344} (2018),  259--274.

\bibitem{Hol11}
  M.~Holtz,
  Sparse Grid Quadrature in High Dimensions with Applications in Finance and Insurance
  (PhD thesis), Springer-Verlag, Berlin, 2011.

\bibitem{Kuo-lattice} F.~Y.~Kuo,
    \texttt{https://web.maths.unsw.edu.au/$\sim$fkuo/lattice/index.html},
    (2007), accessed December 13, 2021.


\bibitem{LPB21}  P.~L'Ecuyer,  F.~Puchhammer, and A.~Ben Abdellah.
    \emph{Monte Carlo and Quasi-Monte Carlo Density Estimation via Conditioning},
    arXiv:1906.04607, (2021). 

\bibitem{Lee00} J.~.M.~Lee,
Introduction to Smooth Manifolds,
Springer, New York, (2000).

\bibitem{NuyWat12}
 D.~Nuyens and B.~J.~Waterhouse,
 \emph{A global adaptive quasi-Monte Carlo algorithm for functions of low truncation dimension
 applied to problems from finance}, in: L.~Plaskota and H.~Wo\'zniakowski (eds.), {M}onte {C}arlo
 and Quasi-{M}onte {C}arlo Methods 2010, pp. 589--607, Springer-Verlag,
 Berlin/Heidelberg (2012).

\bibitem{WWH17}
 C.~Weng, X.~Wang, and Z.~He,
 \emph{Efficient computation of option prices and greeks by quasi-Monte Carlo
 method with smoothing and dimension reduction},
 SIAM J.\ Sci.\ Comput.\ \textbf{39} (2017), B298--B322.

\end{thebibliography}
\end{document}